\newtheorem{theorem}{Theorem}[section]
\newtheorem{corollary}[theorem]{Corollary}
\DeclareMathOperator{\diag}{diag}
\begin{document}

\title[The generalized Ces\`{a}ro matrices of order three are ....]{The generalized Ces\`{a}ro matrices of \\ order three are supraposinormal on $\ell^2$}

\author[H. C. Rhaly Jr.]{H. C. Rhaly Jr.}
\address{
1081 Buckley Drive\\
Jackson, MS   39206\\ 
U. S. A.}
\email {rhaly@alumni.virginia.edu}

\dedicatory{In memory of Douglas Bertrand Thorburn (1949-2015)}

\keywords{Ces\`{a}ro matrix; hyponormal operator; supraposinormal operator}

\subjclass[2010]{47B20}

\begin{abstract}
       The procedure that was used in an earlier paper on the generalized Ces\`{a}ro matrices of order two is adapted here to show that generalized Ces\`{a}ro matrices of order three are  supraposinormal on $\ell^2$.  This leads to information about their posinormality, coposinormality, and hyponormality.  The reader is then invited to formulate a conjecture regarding generalized Ces\`{a}ro matrices of other orders.       
        \end{abstract}

\maketitle

\section{Introduction}

The concept of supraposinormality for operators on a Hilbert space $H$ was introduced and investigated in \cite{2}.  The operator $A  \in \mathcal{B}(H)$, the set of bounded linear operators on $H$, is {\it supraposinormal} if there exist positive operators $P$ and $Q \in \mathcal{B}(H)$ such that  \[AQA^*= A^*PA,\] where at least one of $P$, $Q$ has dense range.  The ordered pair $(Q,P)$ is referred to as an \textit{interrupter pair} associated with $A$.  A \textit{posinormal} operator is a particular case of a supraposinormal with $Q=I$, and a \textit{coposinormal} operator is a particular case of a supraposinormal with $P=I$.   From \cite[Theorem $1$]{2} we know that if $P$ is invertible, then the supraposinormal operator $A$ is coposinormal; and we know that if $Q$ is invertible, then the supraposinormal operator $A$ is posinormal.

Recall that the operator $A \in \mathcal{B}(H)$ is \textit{hyponormal}  if \[< (A^*A - AA^*) f , f >   \geq  0\]  for all $f \in H$.   The following theorem will be of use in the next section.

\begin{theorem}   If $A$ is supraposinormal operator on $H$ with $AQA^*= A^*PA $ and  

\[Q \geq I \geq P \geq 0,\]   

\noindent then $A$ is hyponormal.
\end{theorem}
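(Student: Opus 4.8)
The plan is to prove hyponormality directly from the definition, by establishing the operator inequality $AA^* \le A^*A$ through a short sandwiching argument built on the supraposinormal identity $AQA^* = A^*PA$.

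First I would record the elementary monotonicity fact that for any $X \in \mathcal{B}(H)$ and any self-adjoint $S, T \in \mathcal{B}(H)$ with $S \le T$, one has $XSX^* \le XTX^*$; this is immediate from $\langle XSX^* f, f\rangle = \langle S X^* f, X^* f\rangle \le \langle T X^* f, X^* f\rangle = \langle XTX^* f, f\rangle$. Applying this with $X = A$, $S = I$, $T = Q$ (using $Q \ge I$) gives $AA^* \le AQA^*$. Applying it instead with $X = A^*$, $S = P$, $T = I$ (using $P \le I$) gives $A^*PA \le A^*A$.

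Next I would combine these two inequalities with the hypothesis $AQA^* = A^*PA$ to obtain the chain
\[
AA^* \;\le\; AQA^* \;=\; A^*PA \;\le\; A^*A,
\]
so that $A^*A - AA^* \ge 0$. Testing this against an arbitrary $f \in H$ yields $\langle (A^*A - AA^*)f, f\rangle \ge 0$, which is precisely the condition for $A$ to be hyponormal.

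I do not anticipate a genuine obstacle here; the only point requiring a moment's care is the compression-monotonicity step $S \le T \Rightarrow XSX^* \le XTX^*$, and even that is a one-line inner-product computation. The density-of-range clause in the definition of supraposinormality plays no role in this particular argument and can be left unmentioned in the proof.
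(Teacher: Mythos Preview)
Your proof is correct and is essentially the same argument as the paper's: the paper inserts $-A^*PA + AQA^*$ (which is zero) into $\langle (A^*A - AA^*)f,f\rangle$ and splits it as $\langle (I-P)Af,Af\rangle + \langle (Q-I)A^*f,A^*f\rangle \ge 0$, which is exactly your chain $AA^* \le AQA^* = A^*PA \le A^*A$ written at the level of inner products rather than operator inequalities. Your observation that the dense-range clause is irrelevant here is also accurate.
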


\begin{proof}   Assume the hypothesis is satisfied.  Then   \begin{align*}   \langle (A^*A - AA^*) f , f \rangle = \langle (A^*A - A^* P A + A Q A^* - AA^*) f , f \rangle   \end{align*} 
\begin{align*}   = \langle (I-P)Af,Af \rangle + \langle(Q-I)A^*f,A^*f \rangle  \geq 0   \end{align*}  

 \noindent for all f in $H$, so $A$ is hyponormal.
\end{proof}

As was noted in \cite{3}, the generalized Ces\`{a}ro matrices of order $\beta$, $(C^{(\alpha)}, \beta)$, for $\alpha, \beta \geq 0$ are the lower triangular infinite matrices given by
\begin{align*} (C^{(\alpha)}, \beta)_{ij} =  \frac{\Gamma(i+\alpha+1)}{\Gamma(i-j + 1)} \cdot  \frac{\beta  \Gamma(i-j + \beta)} {\Gamma(i + \alpha + \beta + 1)} . \end{align*}  For fixed $\alpha \geq 0$ and $\beta = 1$, one obtains the generalized Ces\`{a}ro matrices of order one, $(C^{(\alpha)},1)$, with entries \[m_{ij} =  (C^{(\alpha)},1)_{ij} = \begin{cases}
\frac{1}{i + 1 + \alpha} &  \text{ for }  0 \leq j \leq i \\
0 & \text{ for } j>i.
\end{cases}\]   for all $i$.   We note that $(C^{(\alpha)},1) \in \mathcal{B} (\ell^2)$ for all $\alpha > -1$, not just for $\alpha \geq 0$. The operators  $(C^{(\alpha)},1)$ are known to be supraposinormal, posinormal, and coposinormal for all $\alpha > -1$, and $(C^{(\alpha)},1)$ is hyponormal for all $\alpha \geq 0$.

 Using a diagonal operator $P$ and a ``nearly" diagonal operator $Q$,  it was shown in \cite{3} that the matrix $M : \equiv (C^{(\alpha)},2)$, the generalized  Ces\`{a}ro matrices of order two, with entries given by
\[m_{ij} = \begin{cases} 
\frac{2(i + 1 -j )}{(i+ 1 + \alpha)(i+ 2 + \alpha)}& \text{ for }  0 \leq j \leq i \\
0 & \text{ for }  j > i , \end{cases}\]   
are supraposinormal, posinormal, and coposinormal for $\alpha > -1$ and hyponormal for $\alpha \in {0} \cup [1,\infty)$.

\section{Main Results}

Under consideration here will be the matrix $M : \equiv (C^{(\alpha)},3)$ whose entries are as follows: 
\[m_{ij} = \begin{cases} 
\frac{3(i + 1 -j )(i + 2 -j )}{(i+ 1 + \alpha)(i+ 2 + \alpha)(i+ 3 + \alpha)}& \text{ for }  0 \leq j \leq i \\
0 & \text{ for }  j > i . \end{cases}\]   

\noindent We note that  $M \in \mathcal{B}(\ell^2)$ for all $\alpha > -1$.  The procedure followed below is similar to that used in \cite{3}, and some of the calculations have been aided by the Sagemath software system \cite{4}.

\begin{theorem}  The operator $M \in \mathcal{B}(\ell^2)$ is supraposinormal for $\alpha > -1$.  \end{theorem}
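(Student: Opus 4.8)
The plan is to adapt the order-two construction of \cite{3}: to exhibit a positive \emph{diagonal} operator $P=\diag(p_0,p_1,\dots)$ and a positive \emph{nearly diagonal} (finitely banded) operator $Q=(q_{kl})$, both in $\mathcal{B}(\ell^2)$, for which $MQM^*=M^*PM$. Since $P$ will be diagonal with strictly positive entries, $P^*=P$ is injective, so $\Ran(P)$ is dense; the ``dense range'' clause of the definition is therefore automatic, and the substance of the proof is (i) solving the operator equation and (ii) checking $Q\ge 0$.

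It is convenient to record first the factorization $M=DL$, where $D=\diag(d_i)$ with $d_i=\frac{3}{(i+1+\alpha)(i+2+\alpha)(i+3+\alpha)}$ and $L$ is the fixed lower-triangular matrix with $L_{ij}=(i-j+1)(i-j+2)$ for $0\le j\le i$; the decay of $d_i$ is what makes $M\in\mathcal{B}(\ell^2)$ for $\alpha>-1$. With this in hand, $(M^*PM)_{j,j+t}=\sum_{i\ge j+t} d_i^{2}\,p_i\,(i+1-j)(i+2-j)(i+1-j-t)(i+2-j-t)$ is a tail sum of a quartic polynomial in $i$ weighted by $d_i^{2}p_i$, whereas $(MQM^*)_{j,j+t}$ is a finite sum supported on the narrow band of $Q$. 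Comparing the two matrices entry by entry turns the operator identity into an explicit scalar system in the unknowns $\{p_i\}$ and $\{q_{kl}\}$. I would then search for weights $p_i$ rational in $i$ and $\alpha$, of a shape analogous to the order-two case, for which the system is solved by a $Q$ of fixed finite bandwidth whose entries are again rational in $i$ and $\alpha$; verifying that a specific such pair $(P,Q)$ satisfies $MQM^*=M^*PM$ is precisely the sort of closed-form symbolic check for which the Sagemath computations of \cite{4} are used. Along the way one records that $P$ and $Q$ have bounded entries, so both lie in $\mathcal{B}(\ell^2)$, and that $p_i>0$ for every $i$.

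The step I expect to be the real obstacle is proving $Q\ge 0$: for a genuinely banded, non-diagonal operator this is a substantive constraint, and in the order-three case the band is wider and its entries more cumbersome than in \cite{3}. I would try, in order of preference, (a) to display an explicit lower-triangular banded $S$ with $Q=SS^*$; failing that, (b) to show that $Q$ is diagonally dominant with nonnegative diagonal and invoke a Gershgorin-type estimate; or (c) to split $Q=Q_0+Q_1$ with $Q_0$ a positive diagonal operator and $Q_1$ a manifestly positive semidefinite correction. Once $P\ge 0$, $Q\ge 0$, $\Ran(P)$ dense, and $MQM^*=M^*PM$ are all established, the definition is met and $M$ is supraposinormal for every $\alpha>-1$.

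Finally, for the corollaries that follow, one would read off from the same pair $(Q,P)$ whether $P$ is (boundedly) invertible, which by \cite[Theorem~1]{2} would give coposinormality, whether $Q$ is invertible, which would give posinormality, and for which $\alpha$ a rescaling of $P$ and $Q$ arranges $Q\ge I\ge P\ge 0$, which by Theorem~1.1 would give hyponormality; none of this, however, is needed for the statement at hand.
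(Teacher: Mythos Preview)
Your outline matches the paper's strategy: a diagonal $P$ with strictly positive entries and a ``nearly diagonal'' $Q$, with $MQM^*=M^*PM$ verified entrywise (partly via a telescoping series, partly with Sagemath), and dense range read off from $P$. What is missing is the actual pair $(P,Q)$, and your expectations about $Q$ lead you to misjudge where the work lies.

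In the paper the choice is $p_n=\dfrac{(n+1+\alpha)(n+2+\alpha)(n+3+\alpha)}{(n+4+\alpha)(n+5+\alpha)(n+6+\alpha)}$, and with this $P$ the series for $(M^*PM)_{ij}$ telescopes to a closed rational expression in $i,j,\alpha$. The structural point you do not anticipate is that $Q$ is \emph{not} an infinite banded operator with entries varying along the diagonals: it is the identity matrix except for an explicit $3\times3$ block in the upper-left corner (the order-two case in \cite{3} had a $2\times2$ corner). Once $P$ is fixed, those nine entries $q_{ij}$, $0\le i,j\le 2$, are forced by matching the low-index entries of $MQM^*$ and $M^*PM$; for $i,j\ge 3$ one has $q_{ij}=\delta_{ij}$, and the remaining verification is a (computer-aided) polynomial identity. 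Consequently boundedness of $Q$ is automatic, and the positivity you flag as the ``real obstacle'' collapses to positive definiteness of a single $3\times3$ matrix, handled by Sylvester's criterion via two determinants that factor cleanly over $\mathbb{Q}(\alpha)$. None of your strategies (a)--(c) are needed; the genuine work is guessing the shift $n\mapsto n+3$ in $P$ and then executing the entrywise check of the operator identity.
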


\begin{proof} We take  \begin{equation*} P :\equiv \diag \Big \{ \frac{(n + 1 + \alpha)(n + 2 + \alpha)(n + 3 + \alpha)}{(n + 4 + \alpha)(n + 5 + \alpha)(n + 6 + \alpha)} : n \geq 0 \Big \}, \end{equation*} and then compute  $M^*PM$. 

For $j \geq i$, the ($i$, $j$)$^{th}$ entry  of $M^*PM$ is \footnotesize  \begin{equation*} \sum_{k=0}^{\infty} \frac{9(j + 1 - i + k)(j + 2 - i + k)(k + 1)(k + 2)}{(j + 1+ k + \alpha)(j + 2 + k + \alpha)(j + 3 + k + \alpha)(j + 4 + k + \alpha)(j + 5+ k + \alpha)(j + 6+ k + \alpha)}. \end{equation*}

\noindent The series is telescoping, as can be seen by rewriting the summand as \begin{equation*} s(k) - s(k+1) \end{equation*} where  \begin{equation*} s(k) :\equiv   \frac{ak^4+bk^3+ck^2+dk+e}{(j + 1+ k +  \alpha)(j + 2+ k + \alpha)(j + 3+ k + \alpha)(j + 4 + k + \alpha)(j + 5+ k + \alpha)}  \end{equation*}
with \begin{equation*}  a=9, b = 9(8+2\alpha-i+3j),  c = 3 ( 71  -15i +i^2+57j-5ij+10j^2+42 \alpha +6 \alpha^2 -3 \alpha i + 15 \alpha j), \end{equation*}

\begin{equation*} d= \frac{3}{2} (180 - 48i+6i^2+ 236 j -36 ij +  i^2j +90j^2 - 5 ij^2 + 10 j^3 + 188 \alpha + 60 \alpha^2 + 6 \alpha^3 - 24 \alpha i - 3 \alpha^2 i  )   \end{equation*}

\begin{equation*}  + \frac{3}{2}(  \alpha i^2 +144 \alpha j + 21 \alpha^2j - 8 \alpha ij + 25 \alpha j^2)  , \end{equation*} and

\begin{equation*} e= \frac{3}{10} (4+j+\alpha)(5+j+\alpha)(20+24\alpha+6\alpha^2-6i-3\alpha i+i^2+30j+15\alpha j-5ij+10j^2) . \end{equation*}

\noindent Consequently, for $j \geq i$, the ($i$, $j$)$^{th}$ entry  of $M^*PM$ in simplified form is \begin{equation*} s(0) = \frac{3(20+24\alpha+6\alpha^2-6i-3\alpha i+i^2+30j+15\alpha j-5ij+10j^2)}{ 10  (j + 1+ \alpha)(j + 2 + \alpha)(j + 3 + \alpha)} .\end{equation*}  The expression for $i > j$ is similar, with the roles of $i$ and $j$ reversed.

Suppose that $Q = [q_{ij}]$ is given by 

\begin{equation*} Q = \left(\begin{array}{ccccccc} q_{00} & q_{01} & q_{02} &0&0&0&\ldots\\ q_{10} & q_{11} & q_{12} &0&0&0&\ldots\\q_{20}&q_{21}&q_{22}&0&0 &0 & \ldots\\0&0&0&1& 0&0 &\ldots\\0&0&0&0&1 &0 &\ldots\\0&0&0&0&0 &1 &\ldots\\\vdots&\vdots&\vdots&\vdots&\vdots&\vdots&\ddots \end{array}\right);    \end{equation*}   where  $q_{00} =  \frac{(10+12\alpha+3\alpha^2)(1+\alpha)(2+\alpha)(3+\alpha)}{60} $, $q_{10}=q_{01} = - \frac{\alpha(11+4\alpha)(1 + \alpha) (2 + \alpha)(3 + \alpha)}{40}$, 
 
  $q_{20}=q_{02}=   \frac{\alpha(3+2\alpha)(1 + \alpha) (2 + \alpha)(3 + \alpha)}{40}$, $q_{11} =\frac{(5-\alpha+15\alpha^2+6\alpha^3) (2 + \alpha)(3+\alpha)}{30}$, 
   
  $q_{12} = q_{21} = - \frac{\alpha(1 + \alpha) (2 + \alpha)(3 + \alpha)(1+4\alpha)}{40}$, $q_{22}=\frac{(20-6\alpha+7\alpha^2+6\alpha^3+3\alpha^4) (3+\alpha)}{60}$,  
  
  $q_{ii} =1$ for all $i=j \geq 3$, and $q_{ij} = 0$ otherwise.  
  
   \noindent If $X :\equiv QM^*$, then $X = [x_{ij}]$ has entries

\[ x_{ij} = \begin{cases}
\frac{(10j^2+(15\alpha+30)j+20+24\alpha+6\alpha^2)(1+\alpha)(2+\alpha)(3+\alpha)}{20(j+1 + \alpha )(j +2 + \alpha)(j+\alpha+3)} & \text{ for }  i=0 \\
\frac{((10-20\alpha)j^2+(10-50\alpha-30\alpha^2)j-33\alpha-45\alpha^2-12\alpha^3) (2 + \alpha) (3 + \alpha)}{20(j+1 + \alpha )(j +2 + \alpha)(j+\alpha+3)} & \text{ for }  i=1 \\
\frac{((20+10\alpha^2)j^2+(-20+30\alpha+35\alpha^2+15\alpha^3)j+18\alpha+39\alpha^2+27\alpha^3+6\alpha^4) (3 + \alpha)}{20(j+1 + \alpha )(j +2 + \alpha)(j+\alpha+3)} & \text{ for }  i=2 \\
\frac{3 (j+1-i)(j+2-i)}{(j+1+\alpha)(j+2+\alpha)(j+\alpha+3)} & \text{ for } j \geq i \geq 3 \\
0 & \text{ for } i \geq 3, j < i.  \end{cases}\]

\noindent For $j \geq i \geq 3$, the ($i$,$j$)$^{th}$ entry  of $MQM^*=MX$ is \footnotesize \begin{align*} \frac{3(i+1)(i+2)}{(i+1+\alpha)(i+2+\alpha)(i+3+\alpha)} \cdot x_{0j} \end{align*}   \begin{align*}  +  \frac{3i(i+1)}{(i+1+\alpha)(i+2+\alpha)(i+3+\alpha)} \cdot x_{1j}    \end{align*}    \begin{align*}  +  \frac{3(i-1)i}{(i+1+\alpha)(i+2+\alpha)(i+3+\alpha)} \cdot x_{2j}    \end{align*}    \begin{align*}  + \sum_{k=2}^{i-1} \frac{3(i-k)(i-k+1)}{(i+1+\alpha)(i+2+\alpha)(i+3+\alpha)} \cdot \frac{3(j-k)(j-k+1)}{(j+1+\alpha)(j+2+\alpha)(j+\alpha+3)}   \end{align*}  \begin{align*} =   \frac{3(20+24\alpha+6\alpha^2-6i-3\alpha i+i^2+30j+15\alpha j-5ij+10j^2)}{ 10  (j + 1+ \alpha)(j + 2 + \alpha)(j + 3 + \alpha)} ,  \end{align*} \normalsize  as is best verified with the aid of computational software.  The cases $j \geq i = 0$, $j \geq i = 1$,  and $j \geq i = 2$ are left to the reader. The computations for $i \geq j$ are similar, so it follows that $M^*PM = MQM^*$.  It is clear that $P$ has dense range, so $M$ is supraposinormal. 
\end{proof}

\begin{corollary}  The operator $M$ is both posinormal and coposinormal for \mbox{$\alpha > -1$.} \end{corollary}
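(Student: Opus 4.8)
The plan is to obtain both assertions from \cite[Theorem $1$]{2}. Theorem 2.1 already produces an interrupter pair $(Q,P)$ with $MQM^{*}=M^{*}PM$, so by that result it is enough to check that $P$ is invertible (which makes $M$ coposinormal) and that $Q$ is invertible (which makes $M$ posinormal). Thus the work reduces to two invertibility verifications, one for each operator in the pair.

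For $P$: it is the diagonal operator whose $n^{\text{th}}$ entry is
\[p_{n}=\frac{(n+1+\alpha)(n+2+\alpha)(n+3+\alpha)}{(n+4+\alpha)(n+5+\alpha)(n+6+\alpha)}=\prod_{k=1}^{3}\Big(1-\frac{3}{\,n+k+3+\alpha\,}\Big).\]
For $\alpha>-1$ each factor lies in $(0,1)$ and is increasing in $n$, so $(p_{n})$ is strictly increasing and contained in $(0,1)$; hence
\[\frac{(1+\alpha)(2+\alpha)(3+\alpha)}{(4+\alpha)(5+\alpha)(6+\alpha)}\;I\;\le\;P\;\le\;I,\]
so $P$ is bounded below by a positive constant and is therefore invertible. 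By \cite[Theorem $1$]{2}, $M$ is coposinormal.

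For $Q$: in the standard orthonormal basis $Q$ decomposes as the orthogonal direct sum $Q=Q_{0}\oplus I$, where $Q_{0}=[q_{ij}]_{0\le i,j\le 2}$ is the symmetric $3\times 3$ matrix with the entries displayed in the proof of Theorem 2.1 and $I$ acts on $\overline{\operatorname{span}}\{e_{n}:n\ge 3\}$. Hence $Q$ is a positive operator if and only if $Q_{0}$ is positive semidefinite, and $Q$ is invertible if and only if $\det Q_{0}\ne 0$. I would verify, via Sylvester's criterion and with computational assistance as elsewhere in the paper, that for every $\alpha>-1$ the three leading principal minors of $Q_{0}$ are strictly positive: the first is $q_{00}=\frac{(10+12\alpha+3\alpha^{2})(1+\alpha)(2+\alpha)(3+\alpha)}{60}$, each factor of which is positive on $(-1,\infty)$, and the $2\times 2$ and $3\times 3$ minors, once expanded as polynomials in $\alpha$ and factored, are positive there as well. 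This shows simultaneously that $Q_{0}\succ 0$ --- so that $Q\ge 0$, as required for $(Q,P)$ to be an interrupter pair --- and that $Q$ is invertible. By \cite[Theorem $1$]{2}, $M$ is posinormal.

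The only real obstacle is the third step: exhibiting factorizations of the $2\times 2$ and $3\times 3$ minors of $Q_{0}$ that make positivity on $(-1,\infty)$ evident. Since the $q_{ij}$ carry the factors $(1+\alpha)(2+\alpha)(3+\alpha)$ in a graded way and the matrix degenerates (with a simple zero) at $\alpha=-1$, the minors factor fairly cleanly, but this bookkeeping is best handed to the software already used above. Everything else is immediate from \cite[Theorem $1$]{2} and the block structures of $P$ and $Q$.
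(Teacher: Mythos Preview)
Your proposal is correct and follows essentially the same route as the paper: invoke \cite[Theorem~1]{2} after checking that the diagonal operator $P$ is positive and invertible and that the $3\times 3$ block $Q_{0}$ (hence $Q=Q_{0}\oplus I$) is positive definite via Sylvester's criterion. The paper does exactly this, supplying the explicit factorizations you defer to software, namely
\[
\det\begin{pmatrix} q_{00}&q_{01}\\ q_{10}&q_{11}\end{pmatrix}=\frac{(1+\alpha)(2+\alpha)^2(3+\alpha)^2(4+\alpha)(20+15\alpha+3\alpha^2)}{2880},\qquad
\det Q_{0}=\frac{(1+\alpha)(2+\alpha)^2(3+\alpha)^3(4+\alpha)^2(5+\alpha)}{8640},
\]
while treating the invertibility of $P$ as ``clear'' where you give an explicit lower bound.
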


\begin{proof} Since  \begin{equation*} \det \left(\begin{array}{cc} q_{00}&q_{01}  \\ q_{10}&q_{11}    \end{array}\right) = \frac{(1+\alpha)(2+ \alpha)^2(3+ \alpha)^2(4+\alpha)(20+ 15\alpha+3 \alpha^2)}{2880} \end{equation*}  and  \begin{equation*} \det \left(\begin{array}{ccc} q_{00}&q_{01} &q_{02}  \\ q_{10}&q_{11} &q_{12}  \\ q_{20}&q_{21} &q_{22}  \end{array}\right) = \frac{(1+\alpha)(2+\alpha)^2 (3+\alpha)^3(4+\alpha)^2(5+\alpha)}{8640}, \end{equation*} it is clear that $Q$ is positive and invertible for $\alpha > -1$.  Since $P$ is also clearly positive and invertible for $\alpha > -1$, it follows from \cite[Theorem $1$]{2}  that $M$ is both posinormal and coposinormal when $\alpha > -1$.   \end{proof}

\begin{corollary}  If $n$ is a positive integer and  $\alpha > -1$, then $M^n$ is both posinormal and coposinormal.
\end{corollary}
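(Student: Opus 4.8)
The plan is to prove a little more — that $M^{n}$ is supraposinormal with an interrupter pair of two \emph{positive invertible} operators. Granting that, the invocation of \cite[Theorem $1$]{2} used in the preceding corollary applies verbatim and shows that $M^{n}$ is posinormal and coposinormal.

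First I would put the case $n=1$ in a form fit for iteration. Since $M$ is posinormal and coposinormal, there are bounded operators $C,D$ with $M=M^{*}C$ and $M^{*}=MD$ (Douglas's range-inclusion criterion); and since $M$ is lower triangular with every diagonal entry $m_{ii}=6/[(i+1+\alpha)(i+2+\alpha)(i+3+\alpha)]$ nonzero, $M$ is injective, while $\Ran M=\Ran M^{*}$ forces $\overline{\Ran M}=\overline{\Ran M^{*}}=(\Ker M)^{\perp}=\ell^{2}$, so $M^{*}$ is injective as well. Then $M=M^{*}C=M(DC)$ and $M^{*}=MD=M^{*}(CD)$, whence $DC=I=CD$ by injectivity; thus $T:=C$ is invertible, $M=M^{*}T$, and taking adjoints $M^{*}=T^{*}M$, so that $T^{*}MT=M$ and $MT=(T^{*})^{-1}M$. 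A short induction from $M=M^{*}T$ and $M^{*}=T^{*}M$ then gives, for all $n\ge1$,
\[
M^{n}=M^{*}(TM^{*})^{n-1}T,\qquad (M^{*})^{n}=T^{*}(MT^{*})^{n-1}M .
\]

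Next I would try to carry the identity $MQM^{*}=M^{*}PM$ through these factorizations: conjugating the ``middle slot'' of $M^{n}Q_{n}(M^{*})^{n}$ by $T$ and shuttling the resulting factors of $T$ past the interior copies of $M^{*}$ with $MT=(T^{*})^{-1}M$, $M^{*}T=M$, $T^{*}M=M^{*}$, one aims to match it with $(M^{*})^{n}P_{n}M^{n}$ for suitable positive invertible $Q_{n},P_{n}$. Equivalently, it suffices to prove $\Ran(M^{n})=\Ran((M^{*})^{n})$ for every $n$ and then feed this back through Douglas's lemma to obtain positive invertible interrupters, after which \cite[Theorem $1$]{2} applies. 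The case $n=1$ is the preceding corollary; the case $n=2$ is equivalent to $T(\Ran M)=\Ran M$, and the cases $n\ge3$ reduce similarly to range conditions on $T$. None of these is a formal consequence of the smaller cases, so each is checked by the same sort of computation.

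The hard part is precisely that verification. The relations $M=M^{*}T$ and $T^{*}MT=M$ with $T$ invertible are formally self-consistent, so naive manipulation reproduces tautologies rather than $\Ran(M^{2})=\Ran((M^{*})^{2})$; and the comparability $MM^{*}\asymp M^{*}M$ supplied by posinormality together with coposinormality does not transfer to $(MM^{*})^{2}\asymp(M^{*}M)^{2}$, since operator squaring is not operator monotone. So the argument cannot be purely soft. Proving $T(\Ran M)\subseteq\Ran M$ (equivalently, the posinormality of $M^{2}$), and its analogues for the higher powers, should instead be done with the concrete data already assembled — the explicit diagonal $P$, the finite-rank correction $Q$ of the identity, and the telescoping evaluation of $M^{*}PM$ — with the Sagemath assistance on which the rest of the paper relies. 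That is where I expect the substantive work to lie.
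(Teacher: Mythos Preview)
Your proposal is not a proof: you explicitly leave the ``hard part'' --- the verification that $\Ran(M^{n})=\Ran((M^{*})^{n})$ for every $n$ --- undone, and your suggested remedy of a Sagemath-assisted check ``for each $n$'' cannot cover infinitely many positive integers. More importantly, your diagnosis that ``the argument cannot be purely soft'' is mistaken. The paper's proof is a single citation: by Corollary~2.2 the operator $M$ is posinormal and coposinormal, and \cite[Corollary~1(b)]{1} then yields that every positive power $M^{n}$ is posinormal and coposinormal as well. This is a general operator-theoretic fact requiring no specific information about $M$; no telescoping sums, no explicit $P$ and $Q$, and no computer algebra are needed at this step.

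Your preliminary manipulations --- the Douglas factorizations and the invertible intertwiner $T$ with $M=M^{*}T$ and $T^{*}MT=M$ --- are correct and are in the spirit of the arguments used in \cite{1}, but you stop exactly where an inductive step should begin and then declare the induction impossible. The assertion that ``none of these is a formal consequence of the smaller cases'' is precisely what \cite{1} refutes.
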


\begin{proof}  This follows from Corollary $2.2$ and \cite[Corollary $1$ (b)]{1}. 
\end{proof}

\begin{corollary}  The operator $M$ is hyponormal for $\alpha \in \{0,1\} \cup [2,\infty)$.
\end{corollary}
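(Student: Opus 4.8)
The plan is to apply Theorem $1.1$ to the interrupter pair $(Q,P)$ constructed in the proof of Theorem $2.1$. Since that proof already establishes $M^{*}PM = MQM^{*}$, all that remains is to verify the sandwich condition $Q \geq I \geq P \geq 0$ for the indicated values of $\alpha$, after which Theorem $1.1$ delivers hyponormality of $M$ immediately.

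First I would dispatch $I \geq P \geq 0$, which in fact holds for every $\alpha > -1$: the operator $P$ is diagonal with $n$-th entry $p_n = \frac{(n+1+\alpha)(n+2+\alpha)(n+3+\alpha)}{(n+4+\alpha)(n+5+\alpha)(n+6+\alpha)}$, and for $\alpha > -1$ and $n \geq 0$ all six factors are positive while the numerator is strictly smaller than the denominator, so $0 < p_n < 1$ and hence $0 \leq P \leq I$.

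The real content is the inequality $Q \geq I$. Because $Q$ coincides with the identity outside its top-left $3 \times 3$ block, $Q - I$ is block diagonal with a zero block and the real symmetric block
\[ R(\alpha) = \begin{pmatrix} q_{00}-1 & q_{01} & q_{02} \\ q_{01} & q_{11}-1 & q_{12} \\ q_{02} & q_{12} & q_{22}-1 \end{pmatrix}, \]
so $Q \geq I$ is equivalent to $R(\alpha) \geq 0$. I would handle the three parts of $\{0,1\} \cup [2,\infty)$ separately. Substituting $\alpha = 0$ into the given formulas gives $q_{00}=q_{11}=q_{22}=1$ and $q_{01}=q_{02}=q_{12}=0$, so $R(0)=0$; substituting $\alpha = 1$ gives $R(1) = v v^{\top}$ with $v = (3,-3,1)^{\top}$, a rank-one positive semidefinite matrix. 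For $\alpha > 2$ I would invoke Sylvester's criterion: write the leading principal minors $D_1(\alpha) = q_{00}-1$, $D_2(\alpha)$, $D_3(\alpha) = \det R(\alpha)$ as rational functions of $\alpha$ with denominators visibly positive for $\alpha > -1$, and show each numerator is strictly positive on $(2,\infty)$; this makes $R(\alpha)$ positive definite there. Finally $R(2) \geq 0$ follows by continuity, since $R(\alpha)$ is positive definite for $\alpha > 2$ and the cone of positive semidefinite matrices is closed (one may also check directly that $D_1(2) = 45 > 0$, $D_2(2) = 36 > 0$, $D_3(2) = 0$, so $R(2)$ is positive semidefinite of rank two).

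The main obstacle is the sign analysis for $\alpha > 2$: one must confirm that the numerator polynomials of $D_1$, $D_2$, and $D_3$ have no zeros in $(2,\infty)$ — in particular that the polynomial defining $D_3$ has its relevant real roots only at $\alpha = 0,1,2$, which is exactly what produces the gap $(0,1)\cup(1,2)$ in the stated range. These are explicit one-variable polynomial questions, and, as with the other identities in this paper, I would resolve them by factoring with the aid of computer algebra. Once $R(\alpha) \geq 0$ is established on $\{0,1\} \cup [2,\infty)$, Theorem $1.1$ gives hyponormality of $M$ on that set.
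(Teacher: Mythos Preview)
Your proposal is correct and follows essentially the same route as the paper: show $Q\ge I\ge P\ge 0$ by reducing $Q-I$ to its top-left $3\times 3$ block and examining the leading principal minors, then invoke Theorem~1.1. The paper's version is terser---it simply displays the factored $2\times 2$ and $3\times 3$ leading minors, $\frac{(\alpha-1)\alpha^{2}(1+\alpha)(2308+1860\alpha+521\alpha^{2}+60\alpha^{3}+3\alpha^{4})}{2880}$ and $\frac{(\alpha-2)(\alpha-1)^{2}\alpha^{3}(1+\alpha)^{2}(2+\alpha)}{8640}$, from which the sign analysis you describe is immediate---while your explicit verification of $0\le P\le I$ and your separate handling of the degenerate cases $\alpha=0,1,2$ are more careful than what the paper writes out.
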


\begin{proof}
Since   \begin{equation*} \det \left(\begin{array}{cc} q_{00}-1&q_{01}  \\ q_{10}&q_{11}-1    \end{array}\right) = \frac{(\alpha-1)\alpha^2(1+\alpha)(2308+1860\alpha+521\alpha^2+60\alpha^3+3\alpha^4)}{2880} \end{equation*}    and    \begin{equation*} \det \left(\begin{array}{ccc} q_{00}-1&q_{01} &q_{02}  \\ q_{10}&q_{11}-1 &q_{12}  \\ q_{20}&q_{21} &q_{22}-1  \end{array}\right) = \frac{(\alpha-2)(\alpha-1)^2\alpha^3(1+\alpha)^2(2+\alpha)}{8640}, \end{equation*}  are nonnegative for $\alpha \in \{0,1\} \cup [2,\infty)$, it follows from Theorem $1.1$ that $M$ is hyponormal  for those values of $\alpha$.
\end{proof}

\section{Closing Remark}

On the basis of what is now known regarding supraposinormality, posinormality, coposinormality, and hyponormality for the generalized Ces\`{a}ro matrices of orders one, two, and three, the reader is invited to formulate a conjecture concerning these properties for the generalized Ces\`{a}ro matrices of order $k$ when $k \in \mathbb{Z}+$.

\end{document}